\documentclass[12pt]{amsart}
\usepackage{amsfonts}
\textheight22.5cm \textwidth14.8cm
\topmargin-0.2cm \evensidemargin0.7cm \oddsidemargin0.7cm
\numberwithin{equation}{section}
\usepackage{amssymb}
\usepackage{amsmath}

\def\arg{\operatorname{arg}}
\def\meas{\operatorname{meas}}
\newtheorem{lem}{Lemma}[section]
\newtheorem{thm}{Theorem}[section]
\newtheorem{que}{Question}[section]

\theoremstyle{definition}
\newtheorem{defi}{Definition}[section]
\theoremstyle{remark}

\title[T direction]{Hayman $T$ directions of meromorphic functions in some angular domains}
\subjclass{30D10 (primary), 30D20, 30B10, 34M05 (secondary)}
\thanks{The work is supported by NSF of China (No.10871108)}
\author[Wu]{Wu Nan$^{1}$}
\author[Xuan]{Xuan Zu-Xing*$^{1,2}$}
\thanks{*corresponding author}
\address{$^{1}$Department of Mathematical Sciences, Tsinghua University, Beijing,
100084, People's Republic of China}
\address{$^{2}$Basic Department, Beijing Union University, No.97 Bei Si Huan Dong
Road, Chaoyang District, Beijing, 100101, People's Republic of
China} \email{wunan07@gmail.com} \email{xuanzuxing@ss.buaa.edu.cn}
\date{\today, Preliminary version}
\begin{document}

\begin{abstract}
This paper is devoted to investigate the singular directions of
meromorphic functions in some angular domains. We will confirm the
existence of Hayman $T$ directions in some angular domains. This is
a continuous work of Yang [Yang L., Borel directions of meromorphic
functions in an angular domain, Science in China, Math.
Series(I)(1979), 149-163.] and Zheng [Zheng, J.H., Value
Distribution of Meromorphic Functions, preprint.].
\end{abstract}
\maketitle \keywords{ Keywords and phases: Hayman $T$ direction,
Angular domain, P\'{o}lya peaks, Order}

\maketitle

\section{Introduction and Main Results}
\setcounter{equation}{0} Let $f(z)$ be a meromorphic function on the
whole complex plane. We will use the standard notation of the
Nevanlinna theory of meromorphic functions, such as $T(r,f), N(r,f),
m(r,f), \delta(a,f)$. For the detail, see \cite{Yang}. The order and
lower order of it are defined as follows
$$\lambda(f)=\limsup\limits_{r\rightarrow\infty}\frac{\log T(r,f)}{\log r}$$
and
$$\mu(f)=\liminf\limits_{r\rightarrow\infty}\frac{\log T(r,f)}{\log r}.$$

In view of the second fundamental theorem of Nevanlinna, Zheng
\cite{Zheng} introduced a new singular direction, which is named $T$
direction.\\
\begin{defi}\label{def2} A direction $L: \arg z=\theta$ is called a
$T$ direction of $f(z)$ if for any $\varepsilon>0$, we have
\begin{equation*}
\limsup\limits_{r\rightarrow\infty}\frac{N(r,Z_\varepsilon(\theta),f=a)}{T(r,f)}>0
\end{equation*}
for all but at most two values of $a$ in the extended complex plane
$\widehat{\mathbb{C}}$. Here
$$N(r,\Omega,f=a)=\int_1^r{n(t,\Omega,f=a)\over t}dt,$$
where $n(t,\Omega,f=a)$ is the number of the roots of $f(z)=a$ in
$\Omega\cap\{1<|z|<t\},$ counted according to multiplicity. And
through out this paper, we denote
$Z_\varepsilon(\theta)=\{z:\theta-\varepsilon<\arg
z<\theta+\varepsilon\}$ and $\Omega(\alpha,\beta)=\{z:\alpha<\arg
z<\beta\}$.
\end{defi}

The reason about the name is that we use the Nevanlinna's
characteristic $T(r,f)$ as comparison body. Under the growth
condition
\begin{equation}\limsup\limits_{r\rightarrow\infty}\frac{T(r,f)}{(\log
r)^2}=+\infty. \end{equation} Guo, Zheng and Ng \cite{GuoZhengNg}
confirmed the existence of this type direction and they pointed out
the growth condition (1.1) is sharp. Later, Zhang \cite{zhang}
showed that $T$ directions are different from Borel directions whose
definition can be found in \cite{Hayman}.

In 1979, Yang \cite{Yang01} showed the following theorem, which says that the condition for an angular domain to contain at least one Borel direction.\\
\textbf{Theorem A.} \emph{Let $f(z)$ be a meromorphic function on
the whole complex plane, with $\mu<\infty,0<\lambda\leq\infty$. Let
$\rho$ be a finite number such that $\lambda\geq\rho\geq\mu$ and
$\rho>1/2$. If $f^{(k)}(z)(k\geq0)$ has $p$ distinct deficient
values $a_1,a_2,\cdots,a_p$, then in any angular domain
$\Omega(\alpha,\beta)$ such that
$$\beta-\alpha>\max\{\frac{\pi}{\rho},2\pi-\frac{4}{\rho}\sum\limits_{i=1}^p\arcsin\sqrt{\frac{\delta(a_i,f^{(k)})}{2}}\},$$
$f(z)$ has a Borel direction with order $\geq\rho$.}

Recently, Zheng \cite{JH01} discussed the problem of $T$ directions
of a meromorphic function in one angular domain by proving.\\
\textbf{Theorem B.}\emph{ Let $f(z)$ be a transcendental meromorphic
function with finite lower order $\mu$ and non-zero order $\lambda$
and $f$ has a Nevanlinna deficient value $a\in\widehat{\mathbb{C}}$
with $\delta=\delta(a,f)>0$. For any positive and finite $\tau$ with
$\mu\leq\tau\leq\lambda$, consider the angular domain
$\Omega(\alpha,\beta)$ with
$$\beta-\alpha>\max\{\frac{\pi}{\tau},2\pi-\frac{4}{\tau}\arcsin\sqrt{\frac{\delta}{2}}\}.$$
Then $f(z)$ has a T direction in $\Omega=\Omega(\alpha,\beta)$.}

Following  Yang \cite{Yang01} and Zheng \cite{JH01}, we will
continue the discussion of singular directions of $f(z)$ in some
angular domains. The following three questions will be mainly
investigated in this paper.

\begin{que}Can we extend Theorem B to some angular domains
$$X=\bigcup\limits_{j=1}^q\{z:\alpha_j\leq\arg z\leq\beta_j\},$$
where the $q$ pair of real numbers $\{\alpha_j,\beta_j\}$ satisfy
\begin{equation}\label{05}
-\pi\leq\alpha_1<\beta_1\leq\alpha_2<\beta_2\leq\cdots\leq\alpha_q<\beta_q\leq\pi?
\end{equation}
\end{que}
\begin{que} Can $f(z)$ in Theorem B be replaced by any derivative
$f^{(p)}(z)(p\geq0)$?
\end{que}
\begin{que}What can we do if  $f(z)$ has many deficient values $a_1, a_2, a_3, \cdots,
a_l$ in Theorem B?
\end{que}

According to the Hayman inequality (see \cite{Hayman}) on the
estimation of $T(r,f)$ in terms of only two integrated counting
functions for the roots of $f(z)=a$ and $f^{(k)}(z)=b$ with
$b\not=0$, Guo, Zheng and Ng proposed in \cite{GuoZhengNg} a
singular direction named Hayman $T$ direction as follows.

\begin{defi}\label{def2}\ Let $f(z)$ be a transcendental meromorphic
function. A direction $L: \arg z=\theta$ is called a Hayman $T$
direction of $f(z)$ if for any small $\varepsilon>0$, any positive
integer $k$ and any complex numbers $a$ and $b\not=0$, we have
\begin{equation*}\label{4}
\limsup\limits_{r\longrightarrow\infty}\frac{N(r,Z_{\varepsilon}(\theta),f=a)
+N(r,Z_{\varepsilon}(\theta),f^{(k)}=b)}{ T(r,f)}>0.
\end{equation*}\end{defi}
Recently, Zheng and the first author \cite{Zheng02} confirmed the
existence of Hayman $T$ direction under the condition that
\begin{equation}\limsup\limits_{r\rightarrow+\infty}\frac{T(r,f)}{(\log r)^3}=+\infty
\end{equation}\\
In the same paper, the authors pointed out the Hayman $T$ direction
is different from  the $T$ direction and they gave an example to
show the growth condition (1.3) is sharp.  Can we discuss the
problem in some angular domains in the viewpoint of Question 1.1-1.3
? Though out this paper, we define
$$\omega=\max\{\frac{\pi}{\beta_1-\alpha_1},\cdots,\frac{\pi}{\beta_q-\alpha_q}\}.$$

Now, we state our theorems as follows.
\begin{thm}\label{thm1.1}
Let $f(z)$ be a transcendental meromorphic function with finite
lower order $\mu<\infty$, $0<\lambda\leq\infty$. There is an integer
$p\geq0$, such that $f^{(p)}$ has a Nevanlinna deficient value
$a\in\widehat{\mathbb{C}}$ with $\delta(a,f^{(p)})>0$. For $q$ pairs
of real numbers satisfies \eqref{05}. $f$ has at least one Hayman
$T$ direction in $X$ if
\begin{equation}\label{02}
\sum\limits_{j=1}^q(\alpha_{j+1}-\beta_j)<\frac{4}{\sigma}\arcsin\sqrt{\frac{\delta(a,f^{(p)})}{2}},
\end{equation}
where $\mu\leq\sigma\leq\lambda$, and $\omega<\sigma$.
\end{thm}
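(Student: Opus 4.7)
The plan is to argue by contradiction: assuming $f$ has no Hayman $T$ direction in $X$, we will violate hypothesis \eqref{02} by combining an angular Hayman inequality with the Edrei--Baernstein spread relation applied to $f^{(p)}$ and its deficient value $a$.

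Suppose there is no Hayman $T$ direction in $X$. Then for each $\theta\in[\alpha_j,\beta_j]$ one can find a neighborhood $Z_{\varepsilon_\theta}(\theta)$, an integer $k_\theta\geq 1$, and values $a_\theta\in\widehat{\mathbb{C}}$, $b_\theta\neq 0$ witnessing a vanishing limsup in Definition 1.2. Compactness of each closed sector $[\alpha_j,\beta_j]$ yields a finite subcover, and on each subcover piece the angular (Nevanlinna--Tsuji) version of Hayman's inequality converts the vanishing limsup into an angular-characteristic bound $S_{Z_{\varepsilon_{\theta_i}}(\theta_i)}(r,f) = o(T(r,f))$ as $r\to\infty$. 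Summing over the finite cover yields $S_{X'}(r,f) = o(T(r,f))$ on a slightly thickened version $X'$ of $X$, and the angular logarithmic-derivative lemma propagates this to $f^{(p)}$: $S_{X'}(r,f^{(p)}) = o(T(r,f))$. The hypothesis $\omega<\sigma$ is invoked here to ensure that the angular characteristic has the correct order of magnitude compared with $T(r,f)$ along P\'olya peaks of order $\sigma$.

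Next, pick P\'olya peaks $\{r_n\}$ of order $\sigma$ for $T(r,f)$; since $T(r,f)$ and $T(r,f^{(p)})$ differ by at most $O(\log T(r,f))$ outside an exceptional set, after passing to a subsequence $\{r_n\}$ also serves as P\'olya peaks of order $\sigma$ for $T(r,f^{(p)})$. The Edrei--Baernstein spread inequality, applied to $f^{(p)}$ at the deficient value $a$ and order $\sigma$, supplies for every small $\eta>0$
$$\liminf_{n\to\infty}\meas\Bigl\{\theta\in[-\pi,\pi] : \log^+\tfrac{1}{|f^{(p)}(r_n e^{i\theta})-a|}>\eta\, T(r_n,f^{(p)})\Bigr\} \geq \frac{4}{\sigma}\arcsin\sqrt{\frac{\delta(a,f^{(p)})}{2}}$$
(the right-hand side being below $2\pi$ by \eqref{02}). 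The angular bound on $S_{X'}(r,f^{(p)})$ from the preceding paragraph forces the portion of this ``close to $a$'' set lying inside $X'$ to have measure $o(1)$, so in the limit the set is essentially contained in $[-\pi,\pi]\setminus X$, whose angular measure equals $\sum_{j=1}^{q}(\alpha_{j+1}-\beta_j)$ under the cyclic convention $\alpha_{q+1}=\alpha_1+2\pi$. Passing to the limit produces
$$\sum_{j=1}^{q}(\alpha_{j+1}-\beta_j) \geq \frac{4}{\sigma}\arcsin\sqrt{\frac{\delta(a,f^{(p)})}{2}},$$
contradicting \eqref{02}.

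The principal obstacle is the first step above: the Hayman witnesses $(\varepsilon_\theta,k_\theta,a_\theta,b_\theta)$ depend on $\theta$, so one has to assemble the angular Hayman inequality uniformly across the finite subcover (with derivative orders and target values varying from subsector to subsector), then propagate the resulting bound on $f$ to $f^{(p)}$ via the angular logarithmic-derivative estimate, all while keeping the cumulative error strictly smaller than $T(r,f)$ along the P\'olya peak sequence. Once this localized angular-characteristic bound is secured, the spread-measure comparison with $[-\pi,\pi]\setminus X$ closes the argument along the lines of the one-sector case (Theorem B).
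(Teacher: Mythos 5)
Your overall architecture coincides with the paper's: argue by contradiction, use the angular Hayman inequality (Lemma 2.5, which bounds $\mathcal{T}(r,\Omega_\varepsilon,f)$ by the two counting functions) to make the angular Ahlfors--Shimizu characteristic small in each sector, apply the generalized spread relation to $f^{(p)}$ at the deficient value, and compare measures with the gaps $\sum_{j}(\alpha_{j+1}-\beta_j)$. However, there is a genuine gap at the final quantitative step. The spread relation (Lemma 2.2) is only available with a threshold $\Lambda(r)T(r,f)$ where $\Lambda(r)\to 0$; it is not a theorem with a fixed $\eta>0$ as you state it. Once the threshold is $\Lambda(r_n)T(r_n,f^{(p)})$ with $\Lambda\to 0$, your claim that the part of the set $D_\Lambda(r_n,a)$ lying inside $X'$ has measure $o(1)$ no longer follows from the angular bound: if that intersection has measure $c>0$, then \eqref{06} only yields
$$S_{\alpha_{j_0}+\varepsilon,\beta_{j_0}-\varepsilon}\Bigl(r_n,\frac{1}{f^{(p)}-a}\Bigr)\geq \frac{C\,c}{r_n^{\omega_{j_0}}}\,\Lambda(r_n)\,T(r_n,f^{(p)}),$$
which is perfectly compatible with $S=o\bigl(T(r_n,f)/r_n^{\omega_{j_0}}\bigr)$ because of the extra factor $\Lambda(r_n)\to 0$: you only get $c\leq o(1)/\Lambda(r_n)$, which need not be $o(1)$. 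The paper escapes this circularity by \emph{defining} $\Lambda(r)^2$ to be (the maximum over sectors of) the normalized angular characteristic $\mathcal{T}(r_n,\Omega_\varepsilon,f)/T(r_n,f)$ together with the tail and error terms; then the upper bound on $\int\log^+|f^{(p)}-a|^{-1}\,d\theta$ over the sector is $O(\Lambda(r_n)^2T(r_n,f))$ while the spread relation plus the measure arithmetic gives a lower bound $\geq\frac{\varepsilon}{3eqK}\Lambda(r_n)T(r_n,f)$, whence $\Lambda(r_n)\leq O(\Lambda(r_n)^2)$, contradicting $\Lambda(r_n)\to 0$. This self-referential choice of $\Lambda$ is the missing idea; without it your comparison does not close.

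Two smaller inaccuracies. First, $T(r,f)$ and $T(r,f^{(p)})$ do not differ by $O(\log T(r,f))$; they are only comparable up to a multiplicative constant outside a set of bounded logarithmic density (Hayman--Miles, Lemma 2.8), and the paper accordingly takes the P\'{o}lya peaks directly for $f^{(p)}$ and uses Lemma 2.8 only in the lower bound. Second, the bound you should extract from the no-direction hypothesis is $\mathcal{T}(r_n,\Omega_\varepsilon,f)=o(T(r_n,f))$, which via Lemma 2.7 and $\sigma>\omega$ gives $S=o(T(r_n,f)/r_n^{\omega})$; the unweighted statement $S_{X'}(r,f)=o(T(r,f))$ is essentially vacuous and at the wrong scale for comparison with $B_{\alpha,\beta}$, which carries the factor $r^{-\omega}$.
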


\begin{thm}\label{thm1.2}
Let $f(z)$ be a transcendental meromorphic function with finite
lower order $\mu<\infty$, $0<\lambda\leq\infty$. There is an integer
$p\geq0$, such that $f^{(p)}$ has $l\geq1$ distinct deficient values
$a_1, a_2, \cdots, a_l$ with the corresponding deficiency
$\delta(a_1,f^{(p)})$, $\delta(a_2,f^{(p)}),
\cdots,\delta(a_l,f^{(p)})$. For $q$ pair of real numbers
$\{\alpha_j,\beta_j\}$ satisfying \eqref{05} and
\begin{equation}\label{02}
\sum\limits_{j=1}^q(\alpha_{j+1}-\beta_j)<\sum\limits_{j=1}^l\frac{4}{\sigma}\arcsin\sqrt{\frac{\delta(a_j,f^{(p)})}{2}},
\end{equation}
where $\mu\leq\sigma\leq\lambda$. If $\omega<\sigma$, then $f$ has
at least one Hayman $T$ direction in $X$.
\end{thm}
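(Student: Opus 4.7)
The plan is to argue by contradiction, combining the Pólya peak technique with angular Nevanlinna theory, following the spirit of Yang \cite{Yang01} and Zheng \cite{JH01}, but handling the multi-deficiency and multi-sector situation simultaneously.

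Suppose $f$ has no Hayman $T$ direction in $X$. Then for every $\theta \in \overline{X}$ there exist $\varepsilon_\theta>0$, a positive integer $k_\theta$, and values $a_\theta,\,b_\theta\neq 0$ such that
\begin{equation*}
\limsup_{r\to\infty}\frac{N(r,Z_{\varepsilon_\theta}(\theta),f=a_\theta)+N(r,Z_{\varepsilon_\theta}(\theta),f^{(k_\theta)}=b_\theta)}{T(r,f)}=0.
\end{equation*}
First I would invoke compactness of each closed arc $[\alpha_j,\beta_j]$ to extract a finite cover of $X$ by open sub-sectors $Z_{\varepsilon_i}(\theta_i)$, in each of which the combined Hayman-type counting function is $o(T(r,f))$. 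After a slight shrinking/relabelling, the complement of the union of these sectors in $\{|\arg z|\le\pi\}$ consists of the $q$ gaps $\Omega(\beta_j,\alpha_{j+1})$ (together, possibly, with a single sector containing $\arg z=\pm\pi$), of total angular measure $\sum_{j=1}^q(\alpha_{j+1}-\beta_j)$.

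Next, since $\mu\le\sigma\le\lambda$ and $\sigma>\omega\geq 0$, I would select a sequence of Pólya peaks $\{r_n\}$ of order $\sigma$ for $T(r,f)$, so that $T(tr_n,f)\le (1+o(1))t^\sigma T(r_n,f)$ uniformly on compact $t$-intervals. Using the Hayman inequality on each sector $Z_{\varepsilon_i}(\theta_i)$ together with the standard bounds for the Ahlfors–Shimizu/Tsuji characteristic in an angle, the vanishing limsup hypothesis forces the angular characteristic $S_{\alpha,\beta}(r,f^{(p)})$ (in Zheng's notation, cf.\ \cite{JH01}) relative to each $a_j$ to be concentrated on the uncovered gap sectors along the peaks $r_n$. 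Concretely, one obtains, for each $j=1,\ldots,l$,
\begin{equation*}
m_{\alpha,\beta}(r_n,1/(f^{(p)}-a_j))=o(T(r_n,f))\qquad\text{for every sector }(\alpha,\beta)\subset X,
\end{equation*}
so all mass of the deficiency $\delta(a_j,f^{(p)})$ must be carried by the union $G$ of the gap sectors. Here I would use the angular form of the lemma on the logarithmic derivative to pass from $f^{(p)}$ back to estimates for $f$ (and hence for $T(r,f)$).

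Then comes the main quantitative step: applying the Edrei-type spread relation / Baernstein $\ast$-function estimate (in the form used by Yang \cite{Yang01} for the multi-deficiency case), the total angular spread of deficiency $a_j$ on the peaks $\{r_n\}$ of order $\sigma$ is at least $\frac{4}{\sigma}\arcsin\sqrt{\delta(a_j,f^{(p)})/2}$. Since the $l$ deficiency spreads must all be accommodated inside $G$, and distinct deficient values do not cancel in the spread bound, one gets
\begin{equation*}
\operatorname{meas}(G)=\sum_{j=1}^{q}(\alpha_{j+1}-\beta_j)\;\ge\;\sum_{j=1}^{l}\frac{4}{\sigma}\arcsin\sqrt{\frac{\delta(a_j,f^{(p)})}{2}},
\end{equation*}
which directly contradicts hypothesis \eqref{02}. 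Hence $f$ must have at least one Hayman $T$ direction in $X$.

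I expect the main obstacle to be the third step: carrying out the multi-deficiency spread argument inside the gap set $G$ (a disjoint union of $q$ sectors, not a single one), and making sure the Hayman-inequality contribution from $f^{(k)}=b$ can be absorbed into the same angular estimates used for $f=a$. In the single-angle, single-deficiency case this is exactly what Zheng does in \cite{JH01}; the extension requires an additivity argument for the angular ``deficiency masses'' over disjoint arcs, essentially a Baernstein-type convexity/rearrangement bound which is already implicit in Yang's Theorem A. Once this is in place, Theorem \ref{thm1.1} becomes the special case $l=1$, and Theorem \ref{thm1.2} follows by the same scheme with the deficiency sum on the right-hand side.
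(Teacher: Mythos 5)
Your overall strategy --- contradiction, P\'olya peaks of order $\sigma$, the spread relation for the $l$ deficient values of $f^{(p)}$, and the angular Nevanlinna/Ahlfors--Shimizu machinery combined with the Hayman inequality in an angle --- is the same as the paper's. But the step you yourself flag as the ``main obstacle'' is exactly where the argument has a genuine gap, and as written the deduction does not go through. You claim that since the Hayman-type counting functions are $o(T(r,f))$ on every subsector of $X$, the proximity functions of $f^{(p)}$ at the $a_j$ are $o(T(r_n,f))$ there, and hence ``all mass of the deficiency must be carried by the gap set $G$,'' so that $\meas(G)\geq\sum_{j}\frac{4}{\sigma}\arcsin\sqrt{\delta(a_j,f^{(p)})/2}$. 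The spread relation, however, bounds from below the measure of the sets $D_\Lambda(r_n,a_j)=\{\theta:\log^+|f^{(p)}(r_ne^{i\theta})-a_j|^{-1}>\Lambda(r_n)T(r_n,f^{(p)})\}$ for a chosen function $\Lambda(r)\to0$; knowing that $\int_{X}\log^+|f^{(p)}(r_ne^{i\theta})-a_j|^{-1}\,d\theta=o(1)\,T(r_n,f)$ only gives, by Chebyshev's inequality, $\meas(D_\Lambda(r_n,a_j)\cap X)\leq o(1)/\Lambda(r_n)$, which need not be small unless $\Lambda$ tends to zero strictly more slowly than the $o(1)$ error. So ``the spread sets live in $G$'' is not a consequence of what you have established; it is precisely the thing that has to be engineered, and your appeal to an ``additivity argument for angular deficiency masses'' does not supply it.

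The paper closes this gap by defining $\Lambda(r)^2$ (not $\Lambda$ itself) to be the maximum over the $q$ sectors of the normalized error terms $\mathcal T(r_n,\Omega_\varepsilon,f)/T(r_n,f)$, $r_n^{\omega_j}T(r_n,f)^{-1}\int_1^{r_n}\mathcal T(t,\Omega_\varepsilon,f)t^{-\omega_j-1}dt$ and $r_n^{\omega_j}(\log r_n+\log T(r_n,f))/T(r_n,f)$, which tend to $0$ when there is no Hayman $T$ direction, and then running the estimate in the opposite direction: hypothesis \eqref{02} plus the spread relation force $\bigcup_jD_\Lambda(r_n,a_j)$ to meet some fixed sector $(\alpha_{j_0}+2\varepsilon,\beta_{j_0}-2\varepsilon)$ in measure at least $\varepsilon/q$, which yields the lower bound $\frac{\varepsilon}{3eqK}\Lambda(r_n)T(r_n,f)$ for $\sum_j\int\log^+|f^{(p)}-a_j|^{-1}d\theta$ over that sector (the Hayman--Miles lemma being needed here to replace $T(r_n,f^{(p)})$ by $T(r_n,f)$, a point your sketch also omits), while the angular first fundamental theorem, the inequality $S_{\alpha,\beta}(r,f^{(p)})\leq(p+1)S_{\alpha,\beta}(r,f)+O(\log r+\log T(r,f))$ and the bound of $S_{\alpha,\beta}$ by $\mathcal T$ give the upper bound $O(\Lambda(r_n)^2T(r_n,f))$ for the same quantity; hence $\Lambda(r_n)\leq O(\Lambda(r_n)^2)$, contradicting $\Lambda(r_n)\to0$. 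Your compactness cover of $X$ is unnecessary (one works sector by sector with the $q$ given angles and a pigeonhole choice of $j_0$), and the $\min\{2\pi,\cdot\}$ in the spread relation also has to be carried along. None of this is fatal to your plan, but the $\Lambda$ versus $\Lambda^2$ device is the actual content of the proof and is missing from your proposal.
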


We will only prove Theorem \ref{thm1.2}, and  Theorem \ref{thm1.1}
is a special case of Theorem \ref{thm1.2}.

\section{Primary knowledge and some lemmas}
In order to prove the theorems, we give some lemmas. The following
result is from \cite{Zheng}.
\begin{lem}
Let $f(z)$ be a transcendental meromorphic function with lower order
$\mu<\infty$ and order $0<\lambda\leq\infty$, then for any positive
number $\mu\leq\sigma\leq\lambda$ and a set $E$ with finite measure,
there exist a sequence $\{r_n\}$, such that

(1) $r_n\notin E$,
$\lim\limits_{n\rightarrow\infty}\frac{r_n}{n}=\infty$;

(2) $\liminf\limits_{n\rightarrow\infty}\frac{\log T(r_n,f)}{\log
r_n}\geq\sigma$;

(3) $T(t,f)<(1+o(1))(\frac{2t}{r_n})^\sigma
T(r_n/2,f),t\in[r_n/n,nr_n]$;

(4)$T(t,f)/t^{\sigma-\varepsilon_n}\leq2^{\sigma+1}T(r_n,f)/r_n^{\sigma-\varepsilon_n},1\leq
t\leq nr_n, \varepsilon_n=[\log n]^{-2}.$
\end{lem}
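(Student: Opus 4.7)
The plan is to build $\{r_n\}$ as a P\'olya-peak sequence of order $\sigma$ via Edrei's peak-picking, re-indexed so that the actual peak sits at $r_n/2$; this re-indexing is what produces the factor $2$ inside $(2t/r_n)^\sigma$ in (3) and the extra $2^{\sigma+1}$ in (4).

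Set $\varepsilon_n = (\log n)^{-2}$ and consider the auxiliary rescaling
\[
\phi_n(r) = T(r,f)/r^{\sigma-\varepsilon_n}.
\]
Because $\sigma \leq \lambda$, arbitrarily large $r$ satisfy $T(r,f) \geq r^{\sigma-\varepsilon_n/2}$, so $\sup_{r \geq 1} \phi_n(r) = +\infty$; because $\sigma \geq \mu$ with $\mu<\infty$, a second sequence has $T(r,f) \leq r^{\sigma+\varepsilon_n/4}$, so $\phi_n$ also takes arbitrarily small values. Starting from a point where $\phi_n$ is already large and iteratively replacing it by the maximiser of $\phi_n$ on the dilated window $[1, 2n \cdot]$, the process terminates in finitely many steps at some $s_n$ obeying
\[
\phi_n(t) \leq \phi_n(s_n), \qquad 1 \leq t \leq 2n s_n.
\]
Set $r_n = 2 s_n$; the peak then sits at $r_n/2$ and its interval contains $[1, n r_n]$.

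Conditions (3) and (4) unpack directly from the peak inequality. For (4), combine with monotonicity $T(r_n/2,f) \leq T(r_n,f)$:
\[
\frac{T(t,f)}{t^{\sigma-\varepsilon_n}} \leq \frac{T(r_n/2,f)}{(r_n/2)^{\sigma-\varepsilon_n}} \leq 2^{\sigma+1}\,\frac{T(r_n,f)}{r_n^{\sigma-\varepsilon_n}}, \qquad 1 \leq t \leq n r_n.
\]
For (3), the peak bound gives $T(t,f) \leq (2t/r_n)^{\sigma-\varepsilon_n} T(r_n/2,f)$ for $t \in [r_n/n, n r_n]$, and the missing factor $(2t/r_n)^{-\varepsilon_n} \leq n^{\varepsilon_n} = \exp((\log n)^{-1}) \to 1$ supplies the $(1+o(1))$. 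Condition (2) follows because at termination $\phi_n(s_n)$ is bounded below by the large starting value of $\phi_n$, which forces $T(s_n,f) \geq s_n^{\sigma-o(1)}$, hence $T(r_n,f) \geq r_n^{\sigma-o(1)}$ and the $\liminf$ inequality.

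For (1), the peak-picking forces $s_n$ (and therefore $r_n$) to outgrow any polynomial in $n$, giving $r_n/n \to \infty$; to achieve $r_n \notin E$, perturb $r_n$ within a shrinking window $[r_n(1-\delta_n), r_n(1+\delta_n)] \setminus E$ with $\delta_n \to 0$, which is possible because $\meas(E) < \infty$ forces the relative measure of $E$ in such windows to vanish, and by continuity of $T(\cdot, f)$ the perturbation only alters the previous estimates by a $1+o(1)$ factor. The main technical obstacle is the peak-picking itself---establishing termination, the correct growth rate of $s_n$, and the simultaneous compatibility of the peak property with $r_n/n \to \infty$, the lower bound in (2), and the avoidance of $E$---which requires careful interaction between $\varepsilon_n$, the peak interval, and the two-sided bounds coming from $\mu \leq \sigma \leq \lambda$.
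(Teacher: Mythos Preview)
The paper does not give its own proof of this lemma; it is quoted verbatim from Zheng \cite{Zheng} as a known result on P\'olya peaks, so there is no in-paper argument to compare against.  Your plan is the standard Edrei--Fuchs peak-selection construction, which is indeed what underlies the cited result; in particular, the re-indexing $r_n=2s_n$ is exactly the device that produces the factor $(2t/r_n)^{\sigma}$ in (3) and the constant $2^{\sigma+1}$ in (4), and your derivations of (3) and (4) from the peak inequality are correct.

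Two small points on the sketch.  First, the claim that $\phi_n$ ``takes arbitrarily small values'' is both wrong as written (from $T(r,f)\le r^{\sigma+\varepsilon_n/4}$ you only get $\phi_n(r)\le r^{5\varepsilon_n/4}$, which is not small) and unnecessary: the peak-picking only needs $\phi_n$ unbounded above, which you have from $\sigma-\varepsilon_n<\lambda$.  Second, as you yourself flag in the last paragraph, the genuine work is the termination of the iteration together with the simultaneous growth $r_n/n\to\infty$, the lower bound (2), and the avoidance of $E$; your outline is correct in spirit, but the role of the hypothesis $\sigma\ge\mu$ in forcing these is not made explicit.  It enters because one must start the iteration at a point $s_0$ with $\phi_n(s_0)$ already exceeding $\sup_{[1,B_n]}\phi_n$ for a suitably large $B_n$, and the existence of such $s_0$ arbitrarily far out uses both the unboundedness of $\phi_n$ and a comparison with the lower order to prevent the maximiser from collapsing back to small $t$.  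Filling this in is routine but should be stated rather than deferred.
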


We recall that $\{r_n\}$ is called the P\'{o}lya peaks of order
$\sigma$ outside $E$. Given a positive function $\Lambda(r)$
satisfying $\lim_{r\rightarrow\infty}\Lambda(r)=0$. For $r>0$ and
$a\in\mathbb{C}$, define
$$D_\Lambda(r,a)=\{\theta\in[-\pi,\pi):\log^+\frac{1}{|f(re^{i\theta})-a|}>\Lambda(r)T(r,f)\},$$
and
$$D_\Lambda(r,\infty)=\{\theta\in[-\pi,\pi):\log^+|f(re^{i\theta})|>\Lambda(r)T(r,f)\}.$$
The following result is called the generalized spread relation, and
Wang in \cite{Wang} proved this.

\begin{lem}\label{lem03}
Let $f(z)$ be transcendental and meromorphic in $\mathbb{C}$ with
the finite lower order $\mu<\infty$ and the positive order
$0<\lambda\leq\infty$ and has $l\geq1$ distinct deficient values
$a_1, a_2, \cdots, a_l$. Then for any sequence of P\'{o}lya peaks
$\{r_n\}$ of order $\sigma>0,\mu\leq\sigma\leq\lambda$ and any
positive function $\Lambda(r)\rightarrow0$ as $r\rightarrow+\infty$,
we have
$$\liminf\limits_{n\rightarrow\infty}\sum\limits_{j=1}^l \meas D_\Lambda(r_n,a_j)\geq\min\{2\pi, \frac{4}{\sigma}\sum\limits_{j=1}^l \arcsin\sqrt{\frac{\delta(a_j,f^{(p)})}{2}}\}.$$
\end{lem}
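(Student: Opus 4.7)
The plan is to adapt Baernstein's classical spread theorem (for a single deficient value) to the multi-value setting, exploiting the P\'olya-peak control of $T(t,f)$ in a neighborhood of each $r_n$ supplied by Lemma 2.1. For each deficient value $a_j$ ($j=1,\dots,l$), I would introduce the Baernstein $T^*$-function associated with $1/(f-a_j)$,
$$T^*(re^{i\phi},a_j) = \sup_{|E|=2\phi}\frac{1}{2\pi}\int_E \log^+\frac{1}{|f(re^{i\theta})-a_j|}\,d\theta + N(r, 1/(f-a_j)),$$
for $0\leq\phi\leq\pi$, with the obvious modification when $a_j=\infty$. By Baernstein's theorem, each $T^*(\cdot,a_j)$ is subharmonic in the upper half-strip and reduces to $T(r,f)+O(1)$ on the boundary $\phi=\pi$. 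The first fundamental theorem together with the definition of deficiency yields $m(r,1/(f-a_j))\geq(\delta(a_j,f)-o(1))T(r,f)$ along any sequence tending to infinity.

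Localization is the easy step. Outside $D_\Lambda(r_n,a_j)$ the integrand $\log^+ 1/|f-a_j|$ does not exceed $\Lambda(r_n)T(r_n,f)=o(T(r_n,f))$, so the bulk of $m(r_n,1/(f-a_j))$ concentrates on $D_\Lambda(r_n,a_j)$:
$$\int_{D_\Lambda(r_n,a_j)}\log^+\frac{1}{|f(r_n e^{i\theta})-a_j|}\,d\theta \geq 2\pi(\delta(a_j,f)-o(1))T(r_n,f).$$
Since the $a_j$ are distinct, the sets $D_\Lambda(r_n,a_j)$ are pairwise essentially disjoint for large $n$, which already shows $\sum_j \meas D_\Lambda(r_n,a_j)\leq 2\pi+o(1)$; this is consistent with the truncation by $2\pi$ in the conclusion.

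The core of the argument is a Baernstein-type comparison carried out for each $j$ separately. Writing $2\phi_j(n)=\meas D_\Lambda(r_n,a_j)$, I would combine the subharmonicity of $T^*(\cdot,a_j)$ with the P\'olya-peak conditions in Lemma 2.1(3)-(4), which pin the growth of $T(t,f)$ to be nearly $(t/r_n)^\sigma T(r_n,f)$ on the long interval $[r_n/n,nr_n]$, to produce an explicit subharmonic majorant essentially of the shape $C(t/r_n)^\sigma\sin(\sigma\phi)T(r_n,f)$ on the truncated strip $\{r_n/n<t<nr_n,\,0<\phi<\pi/\sigma\}$. Pushing the concentration estimate from the previous paragraph through this majorant on the boundary rays $\phi=\phi_j(n)$ and $\phi=\pi$ then forces
$$\phi_j(n)\;\geq\;\frac{2}{\sigma}\arcsin\sqrt{\delta(a_j,f)/2}-o(1).$$
Summing over $j$ and intersecting with the trivial cap $2\pi$ yields the lemma.

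The main obstacle is precisely the Baernstein/P\'olya-peak comparison in the last paragraph: subharmonicity of $T^*(\cdot,a_j)$ only controls values at interior points by boundary averages, and one needs the P\'olya-peak bounds together with a maximum principle on an appropriately truncated strip to convert those averages into a clean $\sigma$-power trigonometric majorant, rather than just a localized estimate at the single radius $r_n$. A secondary delicate point is the uniformity of the $o(1)$ across $j=1,\dots,l$ when several deficient values are handled simultaneously; each $a_j$ contributes its own error from the peak approximation and from the $\Lambda(r)$-truncation, and these must be driven to zero uniformly in $j$ so that they telescope into a single $o(1)$ in the final sum.
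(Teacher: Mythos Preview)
The paper does not prove this lemma at all: it is quoted verbatim as the ``generalized spread relation'' and attributed to Wang's dissertation \cite{Wang}, with no argument given. So there is no proof in the paper to compare your proposal against.

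That said, your outline is the standard route to the spread relation and its multi-value extension: Baernstein's $T^*$-function, subharmonicity in the half-strip, localization of $m(r,1/(f-a_j))$ onto $D_\Lambda(r,a_j)$ via the definition of $\Lambda$, and then the P\'olya-peak growth control to produce a $\sin(\sigma\phi)$ majorant. You have correctly flagged the genuinely hard step --- converting the boundary data and the peak bounds (3)--(4) of Lemma~2.1 into the trigonometric inequality $\phi_j(n)\geq (2/\sigma)\arcsin\sqrt{\delta(a_j,f)/2}-o(1)$ via a maximum principle on a truncated strip --- and the disjointness of the $D_\Lambda(r_n,a_j)$ for distinct $a_j$, which the paper also records immediately after the lemma. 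The uniformity-in-$j$ issue you raise is not serious here since $l$ is fixed and finite, so a finite sum of $o(1)$ terms is still $o(1)$. Your sketch would need to be fleshed out substantially at the Baernstein comparison step to be a proof, but the architecture is correct; it is simply not something the paper itself undertakes.
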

From \cite{Yang01}, we know that for $a\neq b$ are two deficient
values of $f$, then we have $D_\Lambda(r,a)\bigcap
D_\Lambda(r,b)=\emptyset$.

Nevanlinna theory on the angular domain plays an important role in
this paper. Let us recall the following terms:
\begin{equation*}
\begin{split}
A_{\alpha,\beta}(r,f)&=\frac{\omega}{\pi}\int_1^r(\frac{1}
{t^\omega}-\frac{t^\omega}{r^{2\omega}})\{\log^+|f(te^{i\alpha})|+\log^+|f(te^{i\beta})|\}\frac{dt}{t},\\
B_{\alpha,\beta}(r,f)&=\frac{2\omega}{\pi
r^\omega}\int_{\alpha}^\beta\log^+|f(re^{i\theta})|\sin\omega(\theta-\alpha)d\theta,\\
C_{\alpha,\beta}(r,f)&=2\sum\limits_{1<|b_n|<r}(\frac{1}{|b_n|^\omega}-
\frac{|b_n|^\omega}{r^{2\omega}})\sin\omega(\theta_n-\alpha),
\end{split}
\end{equation*}
where $\omega=\frac{\pi}{\beta-\alpha}$, and
$b_n=|b_n|e^{i\theta_n}$ is a pole of $f(z)$ in the angular domain
$\Omega(\alpha,\beta)$, appeared according to the multiplicities.
The Nevanlinna's angular characteristic is defined as follows:
\begin{equation*}
S_{\alpha,\beta}(r,f)=A_{\alpha,\beta}(r,f)+B_{\alpha,\beta}(r,f)+C_{\alpha,\beta}(r,f).
\end{equation*}
From the definition of $B_{\alpha,\beta}(r,f)$, we have the
following inequality, which will be used in the next.
\begin{equation}\label{06}
B_{\alpha,\beta}(r,f)\geq\frac{2\omega\sin(\omega\varepsilon)}{\pi
r^\omega}\int_{\alpha+\varepsilon}^{\beta-\varepsilon}\log^+|f(re^{i\theta})|d\theta
\end{equation}
The following is  the Nevanlinna first and second fundamental
theorem on the angular domains.

\begin{lem}\label{notation01}
Let $f$ be a nonconstant meromorphic function on the angular domain
$\Omega(\alpha,\beta)$. Then for any complex number $a$,
\begin{equation*}
S_{\alpha,\beta}(r,f)=S_{\alpha,\beta}(r,\frac{1}{f-a})+O(1),
r\rightarrow\infty,
\end{equation*}
and for any $q(\geq3)$ distinct points $a_j\in\widehat{\mathbb{C}}\
(j=1,2,\ldots,q)$,
\begin{equation*}\label{nevan01}
\begin{split}
(q-2)S_{\alpha,\beta}(r,f)&\leq\sum\limits_{j=1}^q\overline{C}_{\alpha,\beta}(r,\frac{1}{f-a_j})+Q_{\alpha,\beta}(r,f),
\end{split}
\end{equation*}
where
$$Q_{\alpha,\beta}(r,f)=(A+B)_{\alpha,\beta}(r,\frac{f'}{f})
+\sum\limits_{j=1}^q(A+B)_{\alpha,\beta}(r,\frac{f'}{f-a_j})+O(1).$$
\end{lem}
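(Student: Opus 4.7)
The plan is to mimic Nevanlinna's classical proofs of the first and second fundamental theorems, transcribed to the angular setting: the Poisson--Jensen formula on a disc is replaced by its sector analogue (the Nevanlinna--Carleman representation for the truncated sector $\Omega(\alpha,\beta)\cap\{1<|z|<r\}$), so that every proximity function $m(r,\cdot)$ becomes $(A+B)_{\alpha,\beta}(r,\cdot)$ and every counting function $N(r,\cdot)$ becomes $C_{\alpha,\beta}(r,\cdot)$.

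For the first identity, I would first record the angular Jensen-type identity $S_{\alpha,\beta}(r,g)=S_{\alpha,\beta}(r,1/g)+O(1)$. This is a direct consequence of the angular Poisson--Jensen representation of $\log|g(re^{i\theta})|$ as boundary integrals along the two rays $\arg z=\alpha,\beta$ and the arc $|z|=r$ plus a weighted sum over the zeros and poles of $g$ inside the sector, the weight being exactly the kernel $(1/|b|^\omega-|b|^\omega/r^{2\omega})\sin\omega(\theta-\alpha)$ appearing in $C_{\alpha,\beta}$. Applied with $g=f-a$ and combined with the elementary shift bound $(A+B)_{\alpha,\beta}(r,f-a)=(A+B)_{\alpha,\beta}(r,f)+O(1)$ together with $C_{\alpha,\beta}(r,f-a)=C_{\alpha,\beta}(r,f)$ (the poles of $f$ and $f-a$ coincide), this yields $S_{\alpha,\beta}(r,f)=S_{\alpha,\beta}(r,1/(f-a))+O(1)$.

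For the second identity, I would transcribe Nevanlinna's auxiliary-function argument. Introduce $F=f'/\prod_{j=1}^q(f-a_j)$, with the usual modification when some $a_j=\infty$, and exploit two bounds. First, the partial-fraction decomposition $F=\sum_j c_j\,f'/(f-a_j)$ gives $(A+B)_{\alpha,\beta}(r,F)\le \sum_j (A+B)_{\alpha,\beta}(r, f'/(f-a_j))+O(1)$, which will be absorbed into $Q_{\alpha,\beta}(r,f)$. Second, since $f$ can be close to at most one $a_j$ at any point, and $|F|\asymp |f'|/|f-a_j|$ near that $a_j$, one obtains the pointwise estimate $\sum_{j=1}^{q}\log^+|f-a_j|^{-1}\le \log^+|F|+\log^+|f'|^{-1}+O(1)$. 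Integrating this, inserting the partial-fraction bound, rewriting $(A+B)_{\alpha,\beta}(r,1/F)$ via $S_{\alpha,\beta}(r,1/F)=S_{\alpha,\beta}(r,F)+O(1)$ and the decomposition $S=(A+B)+C$, and observing that every pole of $F$ arises as a simple pole at an $a_j$-point of $f$ so that $C_{\alpha,\beta}(r,F)\le \sum_j\overline C_{\alpha,\beta}(r,1/(f-a_j))+O(1)$, one converts proximity contributions into counting contributions. Applying the first identity a total of $q$ times to replace each $(A+B)_{\alpha,\beta}(r,1/(f-a_j))$ by $S_{\alpha,\beta}(r,f)-C_{\alpha,\beta}(r,1/(f-a_j))+O(1)$ and rearranging collapses the inequality to the stated $(q-2)S_{\alpha,\beta}(r,f)\le \sum_j\overline C_{\alpha,\beta}(r,1/(f-a_j))+Q_{\alpha,\beta}(r,f)$.

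The main obstacle is the careful bookkeeping of boundary contributions specific to the sector geometry: unlike on a disc, where rotational symmetry removes cross-terms, here the $A$-integrals along the two rays $\arg z=\alpha,\beta$ must be controlled and shown to be absorbable into $Q_{\alpha,\beta}(r,f)$. The other essential ingredient is an angular logarithmic-derivative lemma bounding $(A+B)_{\alpha,\beta}(r,f'/f)$ by $O(\log(r\,S_{\alpha,\beta}(r,f)))$ outside an exceptional set; granting this standard result, the rest of the proof is a direct transcription of Nevanlinna's classical argument.
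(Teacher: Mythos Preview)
The paper does not actually prove this lemma; it is stated without proof as the angular-domain analogue of Nevanlinna's first and second fundamental theorems, with the surrounding text pointing to Goldberg--Ostrovskii for the underlying machinery. Your proposal therefore goes beyond what the paper itself supplies.

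Your outlined argument is the standard one and is essentially correct: the Carleman--Nevanlinna formula on the truncated sector yields the first identity exactly as you describe, and the auxiliary-function route with $F=f'/\prod_j(f-a_j)$ (equivalently, $f'$ times $\sum_j c_j/(f-a_j)$) transcribes Nevanlinna's classical proof to produce the second inequality with the stated remainder $Q_{\alpha,\beta}(r,f)$. One bookkeeping point worth making explicit: the separate term $(A+B)_{\alpha,\beta}(r,f'/f)$ in $Q_{\alpha,\beta}$, which your sketch does not isolate, emerges from the chain
\[
(A+B)_{\alpha,\beta}\Bigl(r,\tfrac{1}{f'}\Bigr)\;\longrightarrow\;S_{\alpha,\beta}(r,f')\;\le\;S_{\alpha,\beta}(r,f)+(A+B)_{\alpha,\beta}\Bigl(r,\tfrac{f'}{f}\Bigr)+\overline{C}_{\alpha,\beta}(r,f),
\]
so be sure this step is written out so the reader sees why both kinds of logarithmic-derivative contributions enter $Q_{\alpha,\beta}$.

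One small overreach: the angular logarithmic-derivative lemma you invoke in your final paragraph is \emph{not} needed to prove the lemma as stated. The lemma merely packages the logarithmic-derivative quantities into $Q_{\alpha,\beta}(r,f)$ without asserting that $Q_{\alpha,\beta}$ is small; the size estimate for $Q_{\alpha,\beta}$ is treated separately in the paper, immediately after the lemma, by citing Goldberg--Ostrovskii. Your proof of the lemma itself should therefore stop once the inequality with the explicit $Q_{\alpha,\beta}$ is in hand.
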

The key point is the estimation of error term
$Q_{\alpha,\beta}(r,f)$, which can be obtained for our purpose of
this paper as follows. And the following is true(see
\cite{Goldberg}). Write
$$Q(r,f)=A_{\alpha,\beta}(r,\frac{f^{(p)}}{f})+B_{\alpha,\beta}(r,\frac{f^{(p)}}{f}).$$
Then

(1)$Q(r,f)=O(\log r)$ as $r\rightarrow\infty$, when
$\lambda(f)<\infty$.

(2)$Q(r,f)=O(\log r+\log T(r,f))$ as $r\rightarrow\infty$ and
$r\notin E$ when $\lambda(f)=\infty$, where $E$ is a set with finite
linear measure.

The following result is useful for our study, the proof of which is
similar to the case of the characteristic function $T(r,f)$ and
$T(r,f^{(k)})$ on the whole complex plane. For the completeness, we
give out the proof.

\begin{lem}\label{lem01}
Let $f(z)$ be a meromorphic function on the whole complex plane.
Then for any angular domain $\Omega(\alpha,\beta)$, we have
$$S_{\alpha,\beta}(r,f^{(p)})\leq(p+1)S_{\alpha,\beta}(r,f)+O(\log r+\log T(r,f)),$$
possibly outside a set of $r$ with finite measure.
\end{lem}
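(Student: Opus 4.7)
The plan is to handle the three components of the angular characteristic separately via the factorization $f^{(p)} = f\cdot (f^{(p)}/f)$, mirroring the standard whole-plane proof that $T(r,f^{(p)})\le (p+1)T(r,f)+S(r,f)$, and then to close out the error term using the angular logarithmic-derivative estimate already quoted just above the lemma.

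First I would split
$$S_{\alpha,\beta}(r,f^{(p)}) = A_{\alpha,\beta}(r,f^{(p)}) + B_{\alpha,\beta}(r,f^{(p)}) + C_{\alpha,\beta}(r,f^{(p)})$$
and treat each piece. For the $A_{\alpha,\beta}$ and $B_{\alpha,\beta}$ terms, apply the elementary subadditivity $\log^+|uv|\le \log^+|u|+\log^+|v|$ pointwise inside the integrals defining them to get
$$A_{\alpha,\beta}(r,f^{(p)}) + B_{\alpha,\beta}(r,f^{(p)}) \le A_{\alpha,\beta}(r,f) + B_{\alpha,\beta}(r,f) + Q(r,f),$$
with $Q(r,f)=A_{\alpha,\beta}(r,f^{(p)}/f)+B_{\alpha,\beta}(r,f^{(p)}/f)$ exactly as in the notation introduced in the paper.

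For the pole-counting contribution $C_{\alpha,\beta}$, the key observation is that a pole of $f$ of multiplicity $k$ at $b=|b|e^{i\theta}\in\Omega(\alpha,\beta)$ is a pole of $f^{(p)}$ of multiplicity $k+p$ at the same point. Reading this off the defining sum for $C_{\alpha,\beta}$ gives
$$C_{\alpha,\beta}(r,f^{(p)}) = C_{\alpha,\beta}(r,f) + p\,\overline{C}_{\alpha,\beta}(r,f) \le (p+1)\,C_{\alpha,\beta}(r,f),$$
using $\overline{C}_{\alpha,\beta}(r,f)\le C_{\alpha,\beta}(r,f)$. Adding the two estimates,
$$S_{\alpha,\beta}(r,f^{(p)}) \le (p+1)\,S_{\alpha,\beta}(r,f) + Q(r,f),$$
and the angular logarithmic-derivative bound (case (2) of the estimate attributed to Goldberg on the preceding page) supplies $Q(r,f)=O(\log r+\log T(r,f))$ outside a set of $r$ of finite linear measure, which is exactly the claimed inequality.

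I do not expect a real obstacle here: once the factorization is chosen, the $A+B$ pieces behave additively under $\log^+$, the multiplicity bookkeeping for $C_{\alpha,\beta}$ is elementary, and the genuinely analytic ingredient, namely controlling the angular logarithmic-derivative term, is delegated to the already-cited result. The only point that requires mild care is recalling that $C_{\alpha,\beta}$ counts poles with multiplicity, so that the transition from $C_{\alpha,\beta}(r,f)+p\,\overline{C}_{\alpha,\beta}(r,f)$ to $(p+1)C_{\alpha,\beta}(r,f)$ is legitimate.
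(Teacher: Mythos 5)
Your proposal is correct and follows essentially the same route as the paper: factor $f^{(p)}=f\cdot(f^{(p)}/f)$, use $\log^+$-subadditivity for the $A+B$ terms, use $C_{\alpha,\beta}(r,f^{(p)})=C_{\alpha,\beta}(r,f)+p\overline{C}_{\alpha,\beta}(r,f)$ for the pole term, and invoke the quoted Goldberg estimate for $Q(r,f)$. The only cosmetic difference is that the paper bounds $p\overline{C}_{\alpha,\beta}(r,f)$ by $pS_{\alpha,\beta}(r,f)$ rather than by $pC_{\alpha,\beta}(r,f)$, which is immaterial.
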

\begin{proof}
In view of the definition of $S_{\alpha,\beta}(r,f)$ and Lemma
\ref{notation01}, we get the following
\begin{equation*}
\begin{split}
S_{\alpha,\beta}(r,f^{(p)})&\leq
C_{\alpha,\beta}(r,f^{(p)})+(A+B)_{\alpha,\beta}(r,f)+(A+B)_{\alpha,\beta}(r,\frac{f^{(p)}}{f})\\
&=p\overline{C}_{\alpha,\beta}(r,f)+S_{\alpha,\beta}(r,f)+(A+B)_{\alpha,\beta}(r,\frac{f^{(p)}}{f})\\
&\leq(p+1)S_{\alpha,\beta}(r,f)+Q(r,f).
\end{split}
\end{equation*}
\end{proof}

Recall the definition of Ahlfors-Shimizu characteristic in an angle
(see \cite{Tsuji}). Let $f(z)$ be a meromorphic function on an angle
$\Omega=\{z:\alpha\leq\arg z\leq \beta\}$. Set
$\Omega(r)=\Omega\cap\{z:1<|z|<r\}$. Define
$$\mathcal{S}(r,\Omega,f)=\frac{1}{\pi}\int\int_{\Omega(r)}{\left(|f'(z)|\over
1+|f(z)|^2\right)^2}d\sigma$$ and
$$\mathcal{T}(r,\Omega,f)=\int_1^r{\mathcal{S}(t,\Omega,f)\over
t}dt.$$

The following lemma is a theorem in \cite{Zheng02}, which is to
controll the term $\mathcal {T}(r,\Omega_\varepsilon)$ using the
counting functions $N(r,\Omega,f=a)$ and $N(r,\Omega,f^{(k)}=b)$.
\begin{lem}\label{04}
Let $f(z)$ be meromorphic in an angle $\Omega=\{z:\alpha\leq\arg z
\leq\beta\}$. Then for any small $\varepsilon>0$, any positive
integer $k$ and any two complex numbers $a$ and $b\not=0$, we have
\begin{equation}\label{9}
\mathcal{T}(r,\Omega_\varepsilon,f) \leq K\{N(2r,\Omega,f=a)+
N(2r,\Omega,f^{(k)}=b)\}+O(\log^3 r)
\end{equation} for a positive constant $K$ depending only on $k$, where $\Omega_\varepsilon=\{z:\alpha+\varepsilon<\arg
z<\beta-\varepsilon\}$.
\end{lem}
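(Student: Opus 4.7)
The plan is to adapt Hayman's classical two-value inequality for meromorphic functions in the plane (which bounds $T(r,f)$ in terms of $N(r,f=a)+N(r,f^{(k)}=b)$ plus an error from the logarithmic derivative lemma) to the angular setting, using the Nevanlinna angular characteristic $S_{\alpha,\beta}(r,f)$ from Lemma \ref{notation01} as the bridge between $\mathcal{T}(r,\Omega_\varepsilon,f)$ and the counting functions on $\Omega$.

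First I would pass from the Ahlfors--Shimizu angular characteristic $\mathcal{T}(r,\Omega_\varepsilon,f)$ to an inequality of the form $\mathcal{T}(r,\Omega_\varepsilon,f)\le C(\varepsilon)\,S_{\alpha,\beta}(r,f)+O(\log r)$, using the fact that the $B$-term in $S_{\alpha,\beta}$ dominates the integral $\int\log^+|f(re^{i\theta})|\,d\theta$ on the sub-angle $(\alpha+\varepsilon,\beta-\varepsilon)$ via inequality (\ref{06}), together with the standard comparison between the proximity function and the spherical area integral defining $\mathcal{S}$. Next, following Hayman, I would introduce an auxiliary function built from $f$, $f^{(k)}$ and $f^{(k+1)}$, for instance
$$\psi=\frac{f^{(k+1)}}{(f-a)(f^{(k)}-b)},$$
apply the angular second fundamental theorem (Lemma \ref{notation01}) to $f$ and to $f^{(k)}$, combine with the angular logarithmic-derivative estimate $Q(r,f)=O(\log r+\log T(r,f))$ valid outside a set of finite linear measure, and thereby obtain an inequality of the form
$$S_{\alpha,\beta}(r,f)\le K'\bigl[\,\overline{C}_{\alpha,\beta}(r,1/(f-a))+\overline{C}_{\alpha,\beta}(r,1/(f^{(k)}-b))\,\bigr]+O(\log r+\log T(r,f)).$$

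Then I would compare the angular counting term $\overline{C}_{\alpha,\beta}(r,\cdot)$ with the ordinary counting function on the angular slice at radius $2r$, using the crude bound that the weight $|b_n|^{-\omega}-|b_n|^\omega/r^{2\omega}$ appearing in $C_{\alpha,\beta}$ is $O(1)$ for $1\le|b_n|\le r$, combined with $n(r,\Omega,\cdot)\le N(2r,\Omega,\cdot)/\log 2$. This yields $\overline{C}_{\alpha,\beta}(r,1/(f-a))\le K''\,N(2r,\Omega,f=a)$ and analogously for $f^{(k)}=b$. Finally I would eliminate the exceptional set in $r$ by the standard trick of enlarging $r$ to $2r$ on the right-hand side, and upgrade the remainder $O(\log r+\log T(r,f))$ to the absolute bound $O(\log^3 r)$ by a dichotomy argument: either $\mathcal{T}(r,\Omega_\varepsilon,f)=O(\log^3 r)$, in which case the asserted inequality is trivial, or else $\log T(r,f)\ll \log r+\log \mathcal{T}(r,\Omega_\varepsilon,f)\ll\log^2 r$, which absorbs the error into the stated $O(\log^3 r)$ term.

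The main obstacle is not conceptual but bookkeeping: one must simultaneously control several logarithmic-derivative remainders in the angular Nevanlinna framework, convert all of them into a single absolute $O(\log^3 r)$ error, and verify that the inequality holds for every $r$ (not merely off a set of finite measure). The most delicate point is the transition from the sub-angle $\Omega_\varepsilon$ back to the full angle $\Omega$ on the counting side, where the interplay between the $A$-, $B$-, and $C$-terms of $S_{\alpha,\beta}$ must be handled so that the constant $K$ depends only on $k$ (as stated) and the factor $C(\varepsilon)$ produced in the first step is absorbed into $K$.
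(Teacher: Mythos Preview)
The paper does not actually prove this lemma: it is quoted verbatim as ``a theorem in \cite{Zheng02}'' and used as a black box. So there is no in-paper proof to compare against; I can only comment on whether your sketch would stand on its own.

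Your first step is where the argument breaks. You claim an inequality of the form
\[
\mathcal{T}(r,\Omega_\varepsilon,f)\le C(\varepsilon)\,S_{\alpha,\beta}(r,f)+O(\log r),
\]
but $S_{\alpha,\beta}$ and $\mathcal{T}(\cdot,\Omega_\varepsilon,\cdot)$ do not have the same growth scale: every term in $S_{\alpha,\beta}$ carries a built-in weight of size $r^{-\omega}$ (look at the definitions of $A$, $B$, $C$), whereas $\mathcal{T}(r,\Omega_\varepsilon,f)$ is an unweighted Ahlfors--Shimizu characteristic that can grow like the full $T(r,f)$. Inequality~(\ref{06}) that you invoke says precisely
\[
\int_{\alpha+\varepsilon}^{\beta-\varepsilon}\log^+|f(re^{i\theta})|\,d\theta
\le \frac{\pi r^{\omega}}{2\omega\sin(\omega\varepsilon)}\,B_{\alpha,\beta}(r,f),
\]
so the $B$-term dominates the angular proximity integral only after multiplication by $r^{\omega}$, not by a constant. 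The same $r^{\omega}$ discrepancy shows up in Lemma~2.7 of the paper, which gives the opposite inequality $S_{\alpha,\beta}\lesssim \mathcal{T}/r^{\omega}$. Consequently, even if your angular Hayman inequality for $S_{\alpha,\beta}$ and your bound $\overline{C}_{\alpha,\beta}\le K''N(2r,\Omega,\cdot)$ are correct, chaining them through $S_{\alpha,\beta}$ would at best yield $\mathcal{T}(r,\Omega_\varepsilon,f)\le K\,r^{\omega}\{N(2r,\Omega,f=a)+N(2r,\Omega,f^{(k)}=b)\}+\cdots$, which is much weaker than the stated lemma and useless for the application in Lemma~2.6. (This also explains why your proposed absorption of $C(\varepsilon)$ into a constant depending only on $k$ cannot work.)

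The proof in \cite{Zheng02} does not pass through $S_{\alpha,\beta}$ at all. It works directly in the Ahlfors covering-surface framework: one has a Hayman-type inequality bounding the mean sheet number $\mathcal{S}(r,\Omega_\varepsilon,f)$ by the number of islands over $a$ (for $f$) and over $b$ (for $f^{(k)}$), with an additive error controlled by the relative boundary length $L(r)$ of the image surface. Integrating in $r$ and handling $L(r)$ via the length--area inequality is what produces the $O(\log^3 r)$ remainder. If you want to reconstruct the argument, that is the route to take; the Nevanlinna angular characteristic $S_{\alpha,\beta}$ is the wrong bridge here.
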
 In order to prove our theorem, we have to use the following
lemma, which is a consequent result of Theorem 3.1.6 in \cite{JH01}.
\begin{lem}
Let $f(z)$ be a transcendental meromorphic function in the whole
plane, and satisfies the conditions of Theorem \ref{thm1.2} or
Theorem \ref{thm1.1}. Take a sequence of P\'{o}lya peak $\{r_n\}$ of
$f(z)$ of order $\sigma>\omega=\frac{\pi}{\beta-\alpha}$. If $f(z)$
has no Hayman T direction in the angular domain
$\Omega(\alpha,\beta)$, then the following real function satisfy
$\lim\limits_{r\rightarrow\infty}\Lambda(r)=0$, which $\Lambda(r)$
is defined as follows
$$\Lambda(r)^2=\max\{\frac{\mathcal
{T}(r_n,\Omega_\varepsilon,f)}{T(r_n,f)},
\frac{r_n^{\omega}}{T(r_n,f)}\int_1^{r_n}\frac{\mathcal
{T}(t,\Omega_\varepsilon,f)}{t^{\omega+1}}dt,\frac{r_n^\omega[\log
r_n+\log T(r_n,f)]}{T(r_n,f)}\},$$ for $r_n\leq r<r_{n+1}.$
\end{lem}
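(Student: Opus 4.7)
\emph{Plan.} I would show each of the three entries defining $\Lambda(r_n)^2$ vanishes as $n\to\infty$, under the hypothesis that no direction in $[\alpha,\beta]$ is a Hayman $T$ direction. The engine is a compact cover: for every $\theta\in[\alpha+\varepsilon,\beta-\varepsilon]$, since $\theta$ is not a Hayman $T$ direction there exist $\varepsilon_\theta>0$, $k_\theta\in\mathbb{N}$, $a_\theta\in\mathbb{C}$ and $b_\theta\ne 0$ with
$$N(r,Z_{\varepsilon_\theta}(\theta),f=a_\theta)+N(r,Z_{\varepsilon_\theta}(\theta),f^{(k_\theta)}=b_\theta)=o(T(r,f)).$$
Compactness of $[\alpha+\varepsilon,\beta-\varepsilon]$ yields finitely many $\theta_1,\dots,\theta_m$ such that the shrunken sectors $Z_{\varepsilon_{\theta_j}/2}(\theta_j)$ still cover $\Omega_\varepsilon$ and satisfy $\overline{Z_{\varepsilon_{\theta_j}/2}(\theta_j)}\subset Z_{\varepsilon_{\theta_j}}(\theta_j)$ strictly.

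\emph{First and third entries.} Applying Lemma \ref{04} to each pair $(Z_{\varepsilon_{\theta_j}/2}(\theta_j),Z_{\varepsilon_{\theta_j}}(\theta_j))$ and summing over $j$ (using that $\mathcal{T}$ is subadditive in the angular region, so the cover of $\Omega_\varepsilon$ suffices), I obtain
\begin{equation*}
\mathcal{T}(r,\Omega_\varepsilon,f)=o(T(r,f))+O(\log^3 r).
\end{equation*}
P\'olya-peak property (2) gives $T(r_n,f)\geq r_n^{\sigma-\eta_n}$ with $\eta_n\to 0$; combined with $\sigma>\omega>0$ this absorbs the $\log^3$ term, killing the first entry. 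The third entry is handled by the same lower bound on $T(r_n,f)$: using $\log T(r_n,f)\leq T(r_n,f)^\eta/\eta$ for any small $\eta>0$ and choosing $\eta<(\sigma-\omega)/\sigma$, both $r_n^\omega\log r_n/T(r_n,f)$ and $r_n^\omega\log T(r_n,f)/T(r_n,f)$ are dominated by a negative power of $r_n$.

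\emph{Second entry (main obstacle).} This is the technical heart. Writing the bound above as $\mathcal{T}(t,\Omega_\varepsilon,f)=\eta(t)T(t,f)+O(\log^3 t)$ with $\eta(t)\to 0$ bounded, I substitute the P\'olya-peak bound (4), $T(t,f)\leq 2^{\sigma+1}T(r_n,f)(t/r_n)^{\sigma-\varepsilon_n}$ for $1\leq t\leq r_n$, to get
\begin{equation*}
\frac{r_n^\omega}{T(r_n,f)}\int_1^{r_n}\frac{\mathcal{T}(t,\Omega_\varepsilon,f)}{t^{\omega+1}}\,dt\leq 2^{\sigma+1}r_n^{\omega-\sigma+\varepsilon_n}\int_1^{r_n}\eta(t)\,t^{\sigma-\varepsilon_n-\omega-1}\,dt+o(1).
\end{equation*}
Given $\kappa>0$, I choose $T_0$ so that $\eta(t)<\kappa$ for $t>T_0$ and split the integral at $T_0$. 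The $[1,T_0]$ contribution is $O(T_0^{\sigma-\omega}r_n^{\omega-\sigma+\varepsilon_n})=o(1)$, and since $\sigma-\varepsilon_n>\omega$ for large $n$ the $[T_0,r_n]$ contribution is bounded by $\kappa r_n^{\sigma-\varepsilon_n-\omega}/(\sigma-\varepsilon_n-\omega)$, yielding $\limsup_n\leq 2^{\sigma+1}\kappa/(\sigma-\omega)$; letting $\kappa\to 0$ finishes. The strict hypothesis $\sigma>\omega$ is essential here, both to make the P\'olya-peak power comparison effective and to trade the vanishing of $\eta(t)$ against the weight $t^{-\omega-1}$ so that the two halves of the split both go to zero.
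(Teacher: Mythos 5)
Your proof is correct and follows essentially the same route as the paper: Lemma \ref{04} converts the absence of Hayman $T$ directions into $\mathcal{T}(r,\Omega_\varepsilon,f)=o(T(2r,f))+O(\log^3 r)$, and the P\'olya-peak inequalities (together with $\sigma>\omega>0$) then annihilate all three entries of $\Lambda(r_n)^2$. Your write-up is in fact more careful in two places the paper glosses over: you supply the compactness/finite-subcover step needed to pass from ``no single direction of $\Omega_\varepsilon$ is a Hayman $T$ direction'' to a bound on the whole shrunken sector, and your split of the integral at $T_0$ justifies the interchange of the $o(\cdot)$ with the integration in the second entry, which the paper performs without comment.
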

\begin{proof} We should treat two cases.\\
Case (I). If there is no Hayman $T$ direction on $\Omega$, then from
Lemma \ref{04}, we have
$$\mathcal {T}(r,\Omega_\varepsilon,f)=o(T(2r,f))+O(\log^3r), \ as\  r\rightarrow\infty.$$
Combining Lemma 2.1 and $\sigma>\omega$, we have
\begin{equation*}
\begin{split}
\int_1^{r_n}\frac{\mathcal
{T}(t,\Omega_\varepsilon,f)}{t^{\omega+1}}dt&=o(\int_1^{r_n}\frac{T(2t,f)}{t^{\omega+1}}dt)+\int_1^{r_n}\frac{O(\log^3
t)}{t^{\omega+1}}dt\\
&\leq
o(\int_1^{r_n}\frac{T(r_n,f)}{t^{\omega+1}}(\frac{2t}{r_n})^\sigma
dt)+O(\log^3r_n)\\
&=o(\frac{T(r_n,f)}{r_n^\omega})+O(\log^3r_n)\\
\end{split}
\end{equation*}
Then
$$\frac{r_n^\omega}{T(r_n,f)}\int_1^{r_n}\frac{\mathcal {T}(t,\Omega_\varepsilon)}{t^{\omega+1}}dt\rightarrow0, \ as \ n\rightarrow\infty. $$
Case (II).  If $$\limsup\limits_{n\rightarrow\infty}\frac{\mathcal
{T}(r_n,\Omega_\varepsilon,f)}{T(r_n,f)}>0,$$ then by \eqref{9}, we
have
$$\limsup\limits_{n\rightarrow\infty}\frac{N(2r_n,\Omega,f=a)+N(2r_n,\Omega,f^{(k)}=b)}{T(r_n,f)}>0.$$
Since $\{r_n\}$ is a sequence of P\'{o}lya peaks of order $\sigma$,
then we have $$T(2r_n,f)\leq2^\sigma T(r_n,f).$$ Then $\Omega$ must
contain a Hayman $T$ direction of $f(z)$. This is contradict to the
hypothesis.

From Case (I) and Case (II) and notice that $r_n^\omega[\log
r_n+\log T(r_n,f)]/T(r_n,f)\rightarrow0,(n\rightarrow\infty)$, we
have proved that $\limsup_{r\rightarrow\infty}\Lambda(r)=0$.

\end{proof}
The following result was firstly established by Zheng
\cite{JH01}(Theorem 2.4.7), it is crucial for our study.
\begin{lem}
Let $f(z)$ be a function meromorphic on
$\Omega=\Omega(\alpha,\beta)$. Then
$$S_{\alpha,\beta}(r,f)\leq2\omega^2\frac{\mathcal {T}(r,\Omega,f)}{r^\omega}+\omega^3\int_1^r\frac{\mathcal {T}(t,\Omega,f)}{t^{\omega+1}}dt+O(1),\ \ \omega=\frac{\pi}{\beta-\alpha}.$$
\end{lem}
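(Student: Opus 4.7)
The strategy is to pass from the boundary proximity function $\log^+|f|$ to the spherical height $v(z)=\tfrac{1}{2}\log(1+|f(z)|^2)$, apply a Green--Jensen identity in $\Omega(r):=\Omega(\alpha,\beta)\cap\{1<|z|<r\}$, and then reduce the resulting area integral of the spherical derivative $(|f'|/(1+|f|^2))^2$ to a radial expression involving $\mathcal{S}(t,\Omega,f)$, from which $\mathcal{T}(r,\Omega,f)$ emerges by integration by parts. Since $\log^+|f|\le v\le\log^+|f|+\tfrac{1}{2}\log 2$ and the boundary kernels in $A_{\alpha,\beta}$ and $B_{\alpha,\beta}$ integrate to bounded quantities, replacing $\log^+|f|$ by $v$ in these two terms costs only $O(1)$, so it suffices to bound $\widetilde{A}_{\alpha,\beta}(r,v)+\widetilde{B}_{\alpha,\beta}(r,v)+C_{\alpha,\beta}(r,f)$.

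Next, I would use the fact that the weights in $A_{\alpha,\beta}$ on the rays $\arg z=\alpha,\beta$, in $B_{\alpha,\beta}$ on the outer arc $|z|=r$, and in $C_{\alpha,\beta}$ at the poles of $f$ are precisely the boundary Poisson-type kernels and the Green-function values for the sector, all obtained from the conformal map $z\mapsto z^\omega$ sending $\Omega$ to a half-plane. Since $v$ has the logarithmic singularities $m_n\log(1/|z-b_n|)$ at the poles $b_n$ of $f$ and off the poles satisfies $\tfrac{1}{2}\Delta v=|f'|^2/(1+|f|^2)^2$, the Green identity for $\Omega(r)$ yields an equality of the form
$$\widetilde{A}_{\alpha,\beta}(r,v)+\widetilde{B}_{\alpha,\beta}(r,v)-C_{\alpha,\beta}(r,f)=\frac{1}{\pi}\iint_{\Omega(r)}K(w;r)\left(\frac{|f'(w)|}{1+|f(w)|^2}\right)^2 dm(w)+O(1),$$
where $K(w;r)\ge 0$ is the Green-type kernel for the sector.

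To convert the area integral to the required radial form, I would write $w=\rho e^{i\varphi}$ and integrate in $\varphi$ first. The angular profile of $K$ is $\sin\omega(\varphi-\alpha)$ times a radial factor of the form $\rho^{-\omega}-\rho^\omega r^{-2\omega}$, so after the $\varphi$-integration one is left with $\pi\int_1^r \kappa(\rho;r)\,d\mathcal{S}(\rho,\Omega,f)$ for an explicit radial kernel $\kappa$. An integration by parts in $\rho$ then converts $d\mathcal{S}$ into $\mathcal{T}(t,\Omega,f)$: the boundary term at $\rho=r$ gives $2\omega^2\,\mathcal{T}(r,\Omega,f)/r^{\omega}$, while $-\partial_\rho\kappa(\rho;r)$ yields a factor $\omega^2\rho^{-\omega-1}$ which, combined with the $1/\rho$ from the $\mathcal{T}$-differentiation, produces $\omega^3\int_1^r \mathcal{T}(t,\Omega,f)/t^{\omega+1}\,dt$. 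The contribution at $\rho=1$ is absorbed in $O(1)$.

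The main obstacle is to set up the Green identity for the sector correctly, with its boundary data matching the kernels of $A_{\alpha,\beta}, B_{\alpha,\beta}$ and its interior singular contributions matching $C_{\alpha,\beta}$, and then to track the precise constants $2\omega^2$ and $\omega^3$. Working through the half-plane image $\zeta=z^\omega$ (where the corresponding Nevanlinna--Tsuji formulas are classical) and then pulling back is the cleanest route; the factors of $\omega$ arise from the Jacobian of this conformal change of variables.
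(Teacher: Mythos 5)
The paper offers no proof of this lemma at all: it is imported verbatim from Zheng's manuscript (Theorem 2.4.7 of \cite{JH01}), so there is no in-paper argument to compare against. Your plan is the standard route to this estimate and is essentially the right one: compare $\log^+|f|$ with $v=\frac12\log(1+|f|^2)$ at a cost of $O(1)$ (the kernels in $A_{\alpha,\beta}$ and $B_{\alpha,\beta}$ do integrate to bounded quantities), apply the Green--Jensen/Tsuji identity in the truncated sector with the harmonic kernel $\bigl(t^{-\omega}-t^{\omega}r^{-2\omega}\bigr)\sin\omega(\theta-\alpha)$, bound $\sin\omega(\theta-\alpha)\le 1$ to reduce the area integral to $\int_1^r\bigl(t^{-\omega}-t^{\omega}r^{-2\omega}\bigr)\,d\mathcal{S}(t,\Omega,f)$, and integrate by parts twice (once to pass from $d\mathcal{S}$ to $\mathcal{S}$, once to pass from $\mathcal{S}(t)/t=\mathcal{T}'(t)$ to $\mathcal{T}$) to produce the two displayed terms. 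The exact constants $2\omega^2$ and $\omega^3$ come out of this bookkeeping, as you anticipate.

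There is one concrete error that, as written, prevents the argument from closing: the sign of $C_{\alpha,\beta}$ in your Green identity. Near a pole $b_n$ of multiplicity $m_n$ one has $v(z)=-m_n\log|z-b_n|+(\text{smooth})$, so $\Delta v=2\bigl(|f'|/(1+|f|^2)\bigr)^2\,dm-2\pi\sum_n m_n\delta_{b_n}$; when you pair $v$ against the (nonnegative) sector kernel, the point masses come out with a \emph{minus} sign on the right, i.e.\ the identity reads $\widetilde A+\widetilde B=(\text{area integral})-C_{\alpha,\beta}(r,f)+O(1)$, equivalently $\widetilde A+\widetilde B+C_{\alpha,\beta}=(\text{area integral})+O(1)$. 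This is what you need, since then $S_{\alpha,\beta}(r,f)\le(\text{area integral})+O(1)$ directly. With your stated version $\widetilde A+\widetilde B-C=(\text{area integral})+O(1)$ you would instead get $S_{\alpha,\beta}(r,f)=(\text{area integral})+2C_{\alpha,\beta}(r,f)+O(1)$, and the term $2C_{\alpha,\beta}(r,f)$ is not controlled by anything in your plan. Fix the sign and the rest of the outline goes through.
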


We also have to use the following lemma, which is due to Hayman and
Miles \cite{Miles}.
\begin{lem}\label{lem02}
Let $f(z)$ be meromorphic in the complex plane. Then for a given
$K>1$, there exists a set $M(K)$ with $\overline{\log
dens}M(K)\leq\delta(K)$,
$\delta(K)=\min\{(2e^{K-1}-1)^{-1},(1+e(K-1)exp(e(1-K)))\}$, such
that
$$\limsup\limits_{r\rightarrow+\infty,r\notin M(K)}\frac{T(r,f)}{T(r,f^{(p)})}\leq3eK.$$
\end{lem}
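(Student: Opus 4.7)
\textbf{Proof proposal for Lemma \ref{lem02}.} My plan is to attack the two directions of the comparison between $T(r,f)$ and $T(r,f^{(p)})$ separately, and then invoke a Borel--Nevanlinna type growth lemma to bound the exceptional set in upper logarithmic density by the explicit functions of $K$ appearing in $\delta(K)$.

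First, the easy direction: from the lemma on the logarithmic derivative, $m(r,f^{(p)}/f)=O(\log r+\log T(r,f))$ outside a set of finite linear measure, and from the counting of poles we have $N(r,f^{(p)})\le N(r,f)+p\overline{N}(r,f)\le (p+1)N(r,f)$. Writing $f^{(p)}=f\cdot(f^{(p)}/f)$ and combining these gives $T(r,f^{(p)})\le (p+1)T(r,f)+O(\log r+\log T(r,f))$. This is not directly the direction needed, but it will serve to absorb error terms in the opposite estimate.

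Second, the main direction: write $f=f^{(p)}\cdot (f/f^{(p)})$ and estimate $m(r,f)\le m(r,f^{(p)})+m(r,f/f^{(p)})$. Using the identity $T(r,f/f^{(p)})=T(r,f^{(p)}/f)+O(1)$ together with the logarithmic derivative estimate on $f^{(p)}/f$, one can rewrite
\begin{equation*}
m(r,f/f^{(p)})\le N(r,f^{(p)}/f)-N(r,f/f^{(p)})+O(\log r+\log T(r,f)).
\end{equation*}
A careful local analysis of zeros and poles of $f$ versus $f^{(p)}$ shows that $N(r,f^{(p)}/f)-N(r,f/f^{(p)})\le p\,\overline N(r,f)+p\,\overline N(r,1/f)$ up to accounting for multiplicities, while $N(r,f)\le N(r,f^{(p)})\le T(r,f^{(p)})$. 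Piecing this together should yield $T(r,f)\le T(r,f^{(p)})+p\,\overline N(r,1/f)+O(\log r+\log T(r,f))$, reducing the problem to controlling how often $T(r,f)$ can exceed a fixed multiple of $T(r,f^{(p)})$.

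Finally, to promote this into the quantitative ratio $3eK$ with an exceptional set of small upper logarithmic density, I would appeal to a Borel--Nevanlinna growth lemma: for a nondecreasing positive function $\psi$, the set where $\psi(er)>K\psi(r)$ has upper logarithmic density at most $(2e^{K-1}-1)^{-1}$, and Miles' refinement gives the alternative bound $1+e(K-1)\exp(e(1-K))$ — the two quantities combined into the $\min$ in $\delta(K)$. Applying this to $\psi(r)=T(r,f^{(p)})$ and iterating the estimate above should force $T(r,f)/T(r,f^{(p)})\le 3eK$ off a set of log density at most $\delta(K)$, the factor $3e$ accounting for the three compositions of the Borel doubling step with the Nevanlinna error terms. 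The main obstacle, as usual in this circle of results, is \emph{not} the algebraic identities but a bookkeeping argument that shows the bad set on which the Borel lemma fails, the bad set on which the logarithmic derivative lemma fails, and the bad set on which $\overline N(r,1/f)$ is comparable to $T(r,f)$ can all be absorbed into a single exceptional set whose upper logarithmic density is bounded by the explicit $\delta(K)$; getting the sharp constant $3eK$ rather than a weaker one is the delicate point.
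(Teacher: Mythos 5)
The paper offers no proof of this lemma at all: it is quoted as a known theorem of Hayman and Miles \cite{Miles}, so there is no in-paper argument to compare yours against. Judged on its own terms, your proposal has a genuine gap at exactly the point where the whole difficulty of the theorem lives. Your reduction ends with
\[
T(r,f)\le T(r,f^{(p)})+p\,\overline N(r,1/f)+O(\log r+\log T(r,f)),
\]
and you propose to finish by applying a Borel--Nevanlinna growth lemma to $\psi(r)=T(r,f^{(p)})$. But that lemma only controls the set where $\psi(er)>K\psi(r)$; it says nothing about $\overline N(r,1/f)$, which is a datum of $f$, not of $\psi$. Moreover $\overline N(r,1/f)$ can be asymptotic to $T(r,f)$ for \emph{every} $r$: take $f(z)=e^{z}-1$, whose zeros are all simple and satisfy $N(r,1/f)=(1+o(1))T(r,f)$. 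For such $f$ your inequality reads $T(r,f)\le T(r,f^{(p)})+p\,T(r,f)(1+o(1))$, which is weaker than the trivial bound and cannot be ``iterated'' into anything. The set on which $\overline N(r,1/f)$ is comparable to $T(r,f)$ is therefore in general all of $[1,\infty)$ and cannot be absorbed into an exceptional set of small upper logarithmic density. The entire content of the Hayman--Miles theorem is the delicate argument showing that, despite this, the simple zeros of $f$ force the derivative to be large often enough that $T(r,f^{(p)})\ge T(r,f)/(3eK)$ off a set of density $\delta(K)$; none of that mechanism appears in your sketch, and the sentence beginning ``iterating the estimate above should force\dots'' is precisely the missing proof.

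Two smaller points. The exceptional sets you do identify (where the logarithmic derivative lemma fails) have finite linear measure and hence zero upper logarithmic density, so absorbing them is indeed routine; that part of your plan is sound. But your explanation of the constant $3eK$ as ``three compositions of the Borel doubling step'' is speculation rather than a derivation, and for $p\ge 2$ any argument that iterates the first-derivative case will degrade the constant (to something like $(3eK)^{p}$), so the uniform bound $3eK$ for all $p$ cannot come out of the route you describe; it requires the full strength of the cited theorem.
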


\section{Proof of theorem \ref{thm1.2}}

\begin{proof}
Case(I). $\lambda(f)>\mu$. Then we choose $\sigma$ such that
$\lambda(f^{(p)})=\lambda(f)>\sigma\geq\mu=\mu(f^{(p)}),
\sigma>\omega$. From the inequality \eqref{02}, we can take a real
number $\varepsilon>0$ such that
\begin{equation}\label{01}
\sum\limits_{j=1}^q(\alpha_{j+1}-\beta_j+4\varepsilon)+\varepsilon<\sum\limits_{j=1}^l\frac{4}{\sigma+2\varepsilon}\arcsin\sqrt{\frac{\delta(a_j,f^{(p)})}{2}},
\end{equation}
and $$\lambda(f^{(p)})>\sigma+2\varepsilon>\mu.$$ Then there exists
a sequence of P\'{o}lya peaks $\{r_n\}$ of order
$\sigma+2\varepsilon$ of $f^{(p)}$ such that $\{r_n\}$ are not in
the set of Lemma \ref{lem01} and Lemma \ref{lem02}.

We define $q$ real functions $\Lambda_j(r)(j=1,2,\cdots,q)$ as
follows.
\begin{equation*}
\begin{split}
\Lambda_j(r)^2=\max\{&\frac{\mathcal
{T}(r_n,\Omega(\alpha_j+\varepsilon,\beta_j-\varepsilon),f)}{T(r_n,f)},\\
&\frac{r_n^{\omega_j}}{T(r_n,f)}
 \int_1^{r_n}\frac{\mathcal
{T}(t,\Omega(\alpha_j+\varepsilon,\beta_j-\varepsilon),f)}{t^{\omega_j+1}}dt,\frac{r_n^{\omega_j}[\log
r_n+\log T(r_n,f)]}{T(r_n,f)}\},
\end{split}
\end{equation*}
for $r_n\leq r<r_{n+1},
\omega_{j}=\frac{\pi}{\beta_{j}-\alpha_{j}}$. By using Lemma 2.5, we
have $\Lambda_j(r)\rightarrow0$, as $r\rightarrow\infty$, if $f(z)$
has no Hayman $T$ directions on $X$. Set $\Lambda(r)=\max_{1\leq
j\leq q}\{\Lambda_j(r)\}$, we have
$\lim_{r\rightarrow\infty}\Lambda(r)=0$. Therefore for large enough
$n$, by Lemma \ref{lem03} we have
\begin{equation}\label{03}
\sum\limits_{j=1}^l \meas D_\Lambda(r_n,a_j)>\min\{2\pi,
\frac{4}{\sigma+2\varepsilon}\sum\limits_{j=1}^l
\arcsin\sqrt{\frac{\delta(a_j,f^{(p)})}{2}}\}-\varepsilon.
\end{equation}
We note that $\sigma+2\varepsilon>1/2$, we suppose for any $n$
\eqref{03} holds. Set
$$K_n=\meas
((\bigcup\limits_{j=1}^lD_\Lambda(r_n,a_j))\bigcap(\bigcup\limits_{j=1}^q(\alpha_j+2\varepsilon,\beta_j-2\varepsilon))).$$
Combining \eqref{01} with \eqref{03}, we obtain
\begin{equation*}
\begin{split}
K_n&\geq \sum\limits_{j=1}^l\meas (D_\Lambda(r_n,
a_j))-\meas([-\pi,\pi)\backslash\bigcup\limits_{j=1}^q(\alpha_j+2\varepsilon,\beta_j-2\varepsilon))\\
&= \sum\limits_{j=1}^l\meas (D_\Lambda(r_n,
a_j))-\meas(\bigcup\limits_{j=1}^q(\beta_j-2\varepsilon,\alpha_{j+1}+2\varepsilon))\\
&=\sum\limits_{j=1}^l\meas (D_\Lambda(r_n,
a_j))-\sum\limits_{j=1}^q(\alpha_{j+1}-\beta_j+4\varepsilon)>\varepsilon>0.
\end{split}
\end{equation*} It is easy to see that, there exists a $j_0$ such that for
infinitely many $n$, we have
\begin{equation*}
\meas
(\bigcup\limits_{j=1}^lD_\Lambda(r_n,a_j)\bigcap(\alpha_{j_0}+2\varepsilon,\beta_{j_0}-2\varepsilon))>\frac{K_n}{q}>\frac{\varepsilon}{q}.
\end{equation*}
We can assume that the above holds for all the $n$.

Set
$E_{nj}=D(r_n,a_j)\bigcap(\alpha_{j_0}+2\varepsilon,\beta_{j_0}-2\varepsilon)$.
Thus we have
\begin{equation}\label{07}
\begin{split}
\sum\limits_{j=1}^l\int_{\alpha_{j_0}+2\varepsilon}^{\beta_{j_0}-2\varepsilon}\log^+\frac{1}{|f^{(p)}(r_ne^{i\theta})-a_j|}d\theta&\geq
\sum\limits_{j=1}^l\int_{E_{nj}}\log^+\frac{1}{|f^{(p)}(r_ne^{i\theta})-a_j|}d\theta\\
&\geq\sum\limits_{j=1}^l\meas(E_{nj})\Lambda(r_n)T(r_n,f^{(p)})\\&>\frac{\varepsilon}{q}\Lambda(r_n)T(r_n,f^{(p)})\\
&>\frac{\varepsilon}{3eqK}\Lambda(r_n)T(r_n,f).
\end{split}
\end{equation}
The last inequality uses Lemma 2.8.

On the other hand, we have
\begin{equation}\label{08}
\begin{split}
&\sum\limits_{j=1}^l\int_{\alpha_{j_0}+2\varepsilon}^{\beta_{j_0}-2\varepsilon}\log^+\frac{1}{|f^{(p)}(r_ne^{i\theta})-a_j|}d\theta\leq\sum\limits_{j=1}^l
\frac{\pi}{2\omega_{j_0}\sin(\varepsilon\omega_{j_0})}r_n^{\omega_{j_0}}B_{\alpha_{j_0}+\varepsilon,\beta_{j_0}-\varepsilon}(r_n,\frac{1}{f^{(p)}-a_j})\\
&<\sum\limits_{j=1}^l\frac{\pi}{2\omega_{j_0}\sin(\varepsilon\omega_{j_0})}r_n^{\omega_{j_0}}S_{\alpha_{j_0}+\varepsilon,\beta_{j_0}-\varepsilon}(r_n,\frac{1}{f^{(p)}-a_j})\\
&=\frac{l\pi}{2\omega_{j_0}\sin(\varepsilon\omega_{j_0})}r_n^{\omega_{j_0}}S_{\alpha_{j_0}+\varepsilon,\beta_{j_0}-\varepsilon}(r_n,f^{(p)})+O(r_n^{\omega_{j_0}})\\
&\leq\frac{l\pi}{2\omega_{j_0}\sin(\varepsilon\omega_{j_0})}r_n^{\omega_{j_0}}[(p+1)S_{\alpha_{j_0}+\varepsilon,\beta_{j_0}-\varepsilon}(r_n,f)+\log r_n+\log T(r_n,f)]+O(r_n^{\omega_{j_0}})\\
&\leq\frac{l\pi}{2\omega_{j_0}\sin(\varepsilon\omega_{j_0})}(p+1)[2\omega_{j_0}^2\mathcal
{T}(r_n,\Omega(\alpha_{j_0}+\varepsilon,\beta_{j_0}-\varepsilon),f)\\&+\omega_{j_0}^3r_n^{\omega_{j_0}}\int_1^{r_n}\frac{\mathcal
{T}(t,\Omega(\alpha_{j_0}+\varepsilon,\beta_{j_0}-\varepsilon),f)}{t^{\omega_{j_0}+1}}dt]\\
&+\frac{l\pi}{2\omega_{j_0}\sin(\varepsilon\omega_{j_0})}r_n^{\omega_{j_0}}[\log r_n+\log T(r_n,f)]+O(r_n^{\omega_{j_0}})\\
&\leq\frac{l\pi}{2\omega_{j_0}\sin(\varepsilon\omega_{j_0})}(p+1)[2\omega_{j_0}^2\Lambda(r_n)^2T(r_n,f)
+\omega_{j_0}^3\Lambda(r_n)^2T(r_n,f)]\\
&+\frac{l\pi}{2\omega_{j_0}\sin(\varepsilon\omega_{j_0})}r_n^{\omega_{j_0}}[\log
r_n+\log T(r_n,f)]+O(r_n^{\omega_{j_0}}),\ \ \ \
\omega_{j_0}=\frac{\pi}{\beta_{j_0}-\alpha_{j_0}-2\varepsilon}. \
\end{split}
\end{equation}
\eqref{07} and \eqref{08} imply that
$$\Lambda(r_n)\leq O(\Lambda(r_n)^2).$$
A contradiction is derived because $\Lambda(r_n)\rightarrow0$ as
$n\rightarrow\infty$.

Case (II). $\lambda(f)=\mu$. By the same argument as in Case1 with
all the $\sigma+2\varepsilon$ replaced by $\sigma=\mu$, we can
derive the same contradiction.
\end{proof}

 \end{document}